\theoremstyle{plain}
\newtheorem{thm}{Theorem}
\newtheorem{prop}[thm]{Proposition}
\theoremstyle{definition}
\theoremstyle{remark}
\newcommand{\oP}{\operatorname{P}}
\newcommand{\oS}{\operatorname{S}}
\newcommand{\oM}{\operatorname{M}}
\newcommand{\cB}{\mathcal{B}}
\newcommand{\bR}{\mathbb{R}}
\newcommand{\norm}[1]{\Vert#1\Vert}
\newcommand{\absip}[2]{\vert\langle#1,#2\rangle\vert}
\title{The impossibility of extending\\the Naimark complement}
\author{Emily J.\ King\thanks{Corresponding author: \texttt{emily.king@colostate.edu}} \thanks{Department of Mathematics, Colorado State University, Fort Collins, CO} \and Dustin G.\ Mixon\thanks{Department of Mathematics, The Ohio State University, Columbus, OH} \thanks{Translational Data Analytics Institute, The Ohio State University, Columbus, OH}}
\date{}
\begin{document}
\maketitle

\begin{abstract}
We show that there is no extension of the Naimark complement to arbitrary frames that satisfies three fundamental properties of the Naimark complement of Parseval frames.\\
Keywords: frame theory, Naimark complement, Gale dual
\end{abstract}

\section{Introduction}

A sequence $(v_i)_{i\in I}$ of vectors in a Hilbert space $H$ is a \textbf{frame} if there exist $a,b\in(0,\infty)$ such that
\[
a\|x\|^2
\leq \sum_{i\in I}|\langle x,v_i\rangle|^2
\leq b\|x\|^2
\qquad
\forall~x\in H.
\]
The frame is said to be \textbf{tight} if one may take $a=b$ and \textbf{Parseval} if $a=b=1$.
Frames were introduced by Duffin and Schaeffer~\cite{duffin1952class} to generalize orthogonal Fourier bases.
Since they provide redundant encodings of data, frames have been used in a variety of applications such as image processing, time-frequency analysis, coding theory, and quantum information theory; see \cite{casazza2013finite} and references therein.
In this paper, we focus on frames that consist of $n$ vectors in $H=\mathbb{R}^d$ with $n>d\geq1$.

Naimark's dilation theorem gives that every Parseval frame for a Hilbert space may be viewed as a projection of an orthonormal basis for some other Hilbert space \cite{neumark1943representation,han2000frames}. 
In the finite-dimensional setting, this may be explicitly viewed as a basis completion problem. 
Namely, arrange the frame vectors $v_1,\ldots,v_n\in\mathbb{R}^d$ as columns of a $d\times n$ matrix $F$, and observe that the rows of $F$ are orthonormal as a consequence of the Parseval condition.
Indeed,
\[
\|F^\top x\|^2
= \sum_{i \in I} \absip{x}{v_i}^2 
= \norm{x}^2 
\qquad
\forall~x\in\mathbb{R}^d
\]
is equivalent to $FF^\top = I$.
We may augment $F$ with additional orthonormal rows until obtaining an $n\times n$ matrix of the form
\begin{equation}
\label{eq.completion}
\left[\begin{array}{c}
~~~F~~~\\G\end{array}\right].
\end{equation}
Since an $n \times n$ real matrix with orthonormal rows is an orthogonal matrix, the columns are also orthonormal, and projecting the orthonormal basis of columns onto the first $d$ coordinates results in the columns of the original Parseval frame $F$.
Note that the columns of $G$ form another Parseval frame of $n$ vectors in $\mathbb{R}^{n-d}$.
Every such $G$ is known as a \textbf{Naimark complement} of $F$.
As we will see, this notion of Naimark complement offers a fruitful notion of duality between Parseval frames, and it is commonplace to leverage this duality to facilitate the discovery of frames with particular properties.
For example, optimal projective codes known as \textit{equiangular tight frames} are closed under the Naimark complement, and so the search for these objects is simplified by modding out by this duality (e.g.,\ \cite{holmes2004optimal}).
Also, Naimark complements were used to prove that the Kadison--Singer problem is equivalent to several other conjectures (e.g.,\ \cite[Chapter 11]{casazza2013finite}), and this web of equivalences later facilitated the joint resolution of these conjectures~\cite{marcus2015interlacing}.

In this paper, we determine the extent to which the Naimark complement can be extended to frames that are not Parseval.
In the next section, we collect certain fundamental properties of the Naimark complement that one might try to preserve in an extension.
In doing so, we also summarize relevant material from matroid theory.
Next, Section~3 gives our main result, and we conclude in Section~4 with a discussion.

\section{Background}

Considering there are many ways to complete $F$ to an orthogonal matrix as in~\eqref{eq.completion}, $F$ has many Naimark complements.
However, these Naimark complements are all related to each other in that they form an orbit $\operatorname{O}(n-d)\cdot G$.
Similarly, different members of the orbit $\operatorname{O}(d)\cdot F$ have identical sets of Naimark complements.
As such, letting $\operatorname{St}(d,n)$ denote the Stiefel manifold of Parseval frames consisting of $n$ vectors in $\mathbb{R}^d$, we are inclined to think of the Naimark complement as a bijection between quotient spaces:
\[
\frac{\operatorname{St}(d,n)}{\operatorname{O}(d)}
\longleftrightarrow
\frac{\operatorname{St}(n-d,n)}{\operatorname{O}(n-d)}.
\]
Recall that the quotient $\operatorname{St}(d,n)/\operatorname{O}(d)$ is diffeomorphic to the Grassmannian manifold $\operatorname{Gr}(d,n)$, whose points are the $d$-dimensional subspaces of $\mathbb{R}^n$.
We identify $\operatorname{Gr}(d,n)$ with the submanifold of $\mathbb{R}^{n\times n}$ consisting of all rank-$d$ orthogonal projections, i.e., the set of \textbf{Gram matrices} $F^\top F$ of Parseval frames $F\in\operatorname{St}(d,n)$.
As a map on
\[
\operatorname{P}(n)
:=\bigsqcup_{d=1}^{n-1}\operatorname{Gr}(d,n)
\subseteq\mathbb{R}^{n\times n},
\]
the Naimark complement $N$ is given by $N(P)=I-P$.
This establishes a sense in which the Naimark complement is a continuous involution.
\begin{quote}
For the remainder of this paper, we apply this Gram matrix view of the Naimark complement map.
\end{quote}
Next, we highlight how the Naimark complement interacts with the underlying \textit{matroid} structures of Parseval frames.

Matroids were introduced by Whitney~\cite{Whi35} as an abstraction of linear independence properties of vectors, and in the time since, they have received attention as combinatorial objects of independent interest.
In what follows, we introduce all of the relevant matroid theory we will use, but we recommend Oxley's book \cite{oxley2011matroid} as a general reference.

A \textbf{matroid} is an ordered pair $M=(X, \cB)$, where $X$ is a finite set (called the \textbf{ground set}) and $\cB$ is a subset of the power set of $X$
(called the \textbf{bases}), that satisfies the following axioms:
\begin{itemize}
\item[(B1)] $\cB \neq \emptyset$.
\item[(B2)] If $A,B \in \cB$ with $a \in A\setminus B$ then there exists $b \in B\setminus A$ such that $(A \setminus \{a\}) \cup \{b\} \in \cB$.
\end{itemize}
Axiom B2 is known as the \textit{basis exchange property}, which is suggestive of its inspiration from linear algebra:
If $X$ indexes a finite sequence $(v_x)_{x\in X}$ of vectors with nontrivial span and $\cB$ consists of the subsets of $X$ that index bases for the span of $(v_x)_{x\in X}$, then $(X,\cB)$ is a matroid.
We refer to $(X,\cB)$ as the matroid \textbf{represented by $(v_x)_{x\in X}$}, but note that many matroids cannot be represented by vectors in this way. 
Every matroid $M=(X,\cB)$ has a \textbf{Gale dual} $M^\ast=(X,\cB^\ast)$ with bases
\[
\cB^\ast
:= \big\{ \, X \backslash B \, : \, B \in \cB \, \big\}.
\]
Notice that $(M^*)^*=M$, and so the word ``dual'' is warranted here.
We will make use of the following result later; see Section~2.2 in~\cite{oxley2011matroid} for details.

\begin{prop}
\label{prop:matroid}
Consider any matroid $M=(X,\cB)$ that is represented by the column vectors of a rank-deficient nonzero matrix $A\in\bR^{n\times n}$.
The Gale dual $M^\ast$ is represented by the column vectors of the $n \times n$ orthogonal projection onto the kernel of $A$.
\end{prop}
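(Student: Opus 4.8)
The plan is to show directly that the matroid $M_P$ represented by the columns of $P$ has the same bases as $M^\ast$. First I would fix notation: write $X=\{1,\dots,n\}$, let $a_1,\dots,a_n\in\bR^n$ be the columns of $A$, put $r:=\operatorname{rank}(A)$ so that $1\le r\le n-1$, and set $W:=\ker A$, a subspace of dimension $n-r$. Since $P$ is the orthogonal projection onto $W$, its column space is exactly $W$ (so $P$ has rank $n-r$) and $\ker P=W^\perp$, which for a real matrix is the row space of $A$. Writing $p_1,\dots,p_n$ for the columns of $P$, the goal becomes $\cB_P=\cB^\ast=\{X\setminus B:B\in\cB\}$.

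The key device is an elementary translation of linear (in)dependence into a statement about kernels. For $S\subseteq X$ let $E_S:=\operatorname{span}\{e_i:i\in S\}$, where $e_1,\dots,e_n$ is the standard basis of $\bR^n$. A nontrivial relation $\sum_{i\in S}\lambda_i a_i=0$ is the same thing as a nonzero vector $\sum_{i\in S}\lambda_i e_i$ lying in $\ker A\cap E_S$, so $(a_i)_{i\in S}$ is linearly independent if and only if $W\cap E_S=\{0\}$; applying the identical reasoning to $P$ gives that $(p_i)_{i\in S}$ is linearly independent if and only if $\ker P\cap E_S=W^\perp\cap E_S=\{0\}$.

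Next I would run the complement argument. Fix $B\subseteq X$ with $|B|=r$. Then $\dim W+\dim E_B=(n-r)+r=n$, so $W\cap E_B=\{0\}$ if and only if $W+E_B=\bR^n$, and passing to orthogonal complements this is equivalent to $W^\perp\cap E_B^\perp=\{0\}$; since $E_B^\perp=E_{X\setminus B}$, the two observations above combine to show that $(a_i)_{i\in B}$ is a basis for the column space of $A$ if and only if $(p_j)_{j\in X\setminus B}$ is a basis for the column space of $P$. Because every basis of $M$ has size $r$ and every basis of $M_P$ has size $\operatorname{rank}(P)=n-r$, this equivalence is exactly the statement $B\in\cB\iff X\setminus B\in\cB_P$, from which $\cB_P=\cB^\ast$ and hence $M_P=M^\ast$ follow.

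I do not anticipate a serious obstacle: the linear algebra involved — rewriting a linear dependence as a kernel vector supported on $S$, and the dimension count behind the orthogonal-complement swap $W\cap E_B=\{0\}\iff W^\perp\cap E_B^\perp=\{0\}$ — is routine. The only point needing care is the bookkeeping at the end, namely confirming that both inclusions $\cB_P\subseteq\cB^\ast$ and $\cB^\ast\subseteq\cB_P$ follow cleanly from the single equivalence proved for sets $B$ of size exactly $r$; this works precisely because the common size of the bases on each side is forced ($r$ on the $M$ side, $n-r$ on the $M_P$ side).
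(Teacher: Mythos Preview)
Your argument is correct. The translation of linear independence of a subfamily $(a_i)_{i\in S}$ into $W\cap E_S=\{0\}$, the orthogonal-complement swap $W\cap E_B=\{0\}\iff W^\perp\cap E_{X\setminus B}=\{0\}$ for $|B|=r$, and the final bookkeeping all go through as you describe.

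As for comparison: the paper does not actually prove this proposition; it simply cites Section~2.2 of Oxley's \emph{Matroid Theory} for details. The standard argument there represents the dual matroid via a matrix whose rows span the null space of the original matrix and checks the basis-complement correspondence using row reduction (pivot columns of $[I_r\mid D]$ versus those of $[-D^\top\mid I_{n-r}]$). Your version is essentially the same idea, but phrased coordinate-free via orthogonal complements rather than via an explicit echelon form. The orthogonal-complement formulation is arguably cleaner here because the statement already singles out the \emph{orthogonal} projection onto $\ker A$, so $\ker P=W^\perp$ comes for free and you avoid any change-of-basis discussion.
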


Note that since $F$ and $F^\top F$ have the same kernel, their column vectors exhibit the same linear dependencies, and so they represent the same matroid.
As such, Proposition~\ref{prop:matroid} implies that Naimark complementary Parseval frames represent Gale-dual matroids.
That is, the Naimark complement map $N\colon\oP(n)\to\oP(n)$ is \textbf{Gale} in the sense that the matroid represented by the column vectors of $N(P)$ is Gale-dual to the matroid represented by the column vectors of $P$.

\section{Main result}

At this point, we have highlighted that the Naimark complement $N\colon\oP(n)\to\oP(n)$ defined by $N(P)=I-P$ is
\begin{itemize}
\item[(i)]
continuous,
\item[(ii)]
involutive, and
\item[(iii)]
Gale.
\end{itemize}
While $N$ is only defined over the set $\oP(n)$ of $n\times n$ orthogonal projections onto proper nontrivial subspaces of $\mathbb{R}^n$ (i.e., the set of Gram matrices of Parseval frames consisting of $n>d\geq1$ vectors), it is natural to extend $N$ to the larger set $\oS(n)$ of $n\times n$ rank-deficient nonzero positive semidefinite matrices (i.e., the set of Gram matrices of frames consisting of $n>d\geq1$ vectors).
Notably, $\oP(n)$ can be partitioned into $n-1$ connected components, each corresponding to a different rank.
Meanwhile, $\oS(n)$ can be stratified into $n-1$ smooth submanifolds according to rank.
Since the matrices in different strata correspond to frames over different spaces, it is reasonable to ask for a weaker version of (i) that accounts for this structure:
\begin{itemize}
\item[(i')]
stratum-wise continuous.
\end{itemize}
(Here, after restricting to a stratum of $\oS(n)$, continuity is defined with respect to the subspace topology of the standard topology in $\bR^{n\times n}$.)
This suggests a fundamental question:

\begin{center}
\textit{Is there an extension of $N$ to $\oS(n)$ that satisfies (i'), (ii), and (iii)?}
\end{center}

Casazza et al.~\cite{CasazzaFMPS:13} proposed the following extension of $N$:
\begin{equation}
\label{eq.casazza}
E\colon\oS(n)\to\oS(n),
\qquad
E(A)=\lambda_{\max}(A)I-A.
\end{equation}
(One may extract this expression from the proof of Proposition~3.1 in~\cite{CasazzaFMPS:13}.)
Unfortunately, this extension fails to satisfy (iii).
In particular, given any rank-$d$ matrix $A\in\oS(n)$ with at least two distinct positive eigenvalues, then $E(A)$ has rank strictly greater than $n-d$, and so its matroid is not dual to $A$'s.
Our main result establishes that this phenomenon generalizes:
Any extension of the Naimark complement is doomed to fail in one way or another.

\begin{thm}
\label{thm.main}
For $n\geq2$, the Naimark complement over $\oP(n)$ has an extension over $\oS(n)$ that is
\begin{itemize}
\item[(a)] stratum-wise continuous,
\item[(b)] involutive, and
\item[(c)] Gale
\end{itemize}
if and only if $n=2$.
Otherwise, there is an extension that satisfies any two of (a), (b), and (c).
\end{thm}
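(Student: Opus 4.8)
The plan is to establish three things: when $n=2$ there is an extension of $N$ satisfying all of (a), (b), (c); when $n\ge 3$ there is none; and for every $n\ge 2$ each of the three \emph{pairs} of properties is achievable by some extension (this handles the ``otherwise'' clause, and for $n=2$ it also follows from the first point). \textbf{The case $n=2$.} Here $\oS(2)=\oS_1(2)$ is the set of nonzero rank-one positive semidefinite $2\times 2$ matrices, and I would check that
\[
\tilde N(A):=\frac{1}{\operatorname{tr}(A)}\left(I-\frac{A}{\operatorname{tr}(A)}\right)=\frac{1}{\operatorname{tr}(A)^2}\bigl(\operatorname{tr}(A)\,I-A\bigr)
\]
works. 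Writing $A=vv^\top$, one computes $\operatorname{tr}(A)\,I-A=ww^\top$ with $w=(v_2,-v_1)$, so $\tilde N$ maps $\oS(2)$ into $\oS(2)$; it is continuous there since $\operatorname{tr}(A)>0$; on a rank-one projection $\operatorname{tr}(A)=1$ so $\tilde N(A)=I-A=N(A)$; the identity $\operatorname{tr}(\tilde N(A))=1/\operatorname{tr}(A)$ makes $\tilde N\circ\tilde N=\mathrm{id}$ a one-line check; and since the column matroid of a rank-one $uu^\top$ is determined by $\operatorname{supp}(u)$, while $\operatorname{supp}(w)$ stands in exactly the relation to $\operatorname{supp}(v)$ prescribed by Gale duality (a short case analysis on $|\operatorname{supp}(v)|\in\{1,2\}$), $\tilde N$ is Gale.

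\textbf{Impossibility for $n\ge 3$.} Suppose some extension $\tilde N$ is stratum-wise continuous, involutive, and Gale. The column matroid of a matrix has rank equal to the matrix rank, so being Gale forces $\tilde N(\oS_d(n))\subseteq\oS_{n-d}(n)$ for every $d$; combined with involutivity, $\tilde N|_{\oS_1(n)}$ is a bijection onto $\oS_{n-1}(n)$ whose inverse $\tilde N|_{\oS_{n-1}(n)}$ is also continuous by (a). Hence $\oS_1(n)\cong\oS_{n-1}(n)$ as topological spaces. But these are smooth manifolds of dimensions $n$ and $\binom{n+1}{2}-1$ respectively ($\oS_1(n)$: a rank-one positive semidefinite matrix is recovered from any of its nonzero columns, giving a chart in $\bR^n$; $\oS_{n-1}(n)$: an open subset of the hypersurface $\{\det=0\}$ in the $\binom{n+1}{2}$-dimensional space of symmetric matrices), and $n=\binom{n+1}{2}-1$ only when $n=2$; so invariance of dimension yields a contradiction for $n\ge 3$. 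I expect this to be the crux of the argument, and the decisive role is played by (c): it is what forces $\tilde N$ to reverse rank (hence to carry $\oS_1(n)$ \emph{onto} $\oS_{n-1}(n)$ rather than, say, into itself), after which the dimension count is immediate. Note this argument does not even use that $\tilde N$ extends $N$.

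\textbf{Two of the three, for every $n\ge 2$.} For \emph{(a) and (b)}, the map $E(A)=\lambda_{\max}(A)I-A$ of Casazza et al.\ already suffices: it is continuous on $\oS(n)$, restricts to $N$ on projections, maps $\oS(n)$ into itself, and is an involution because $\lambda_{\max}(E(A))=\lambda_{\max}(A)-\lambda_{\min}(A)=\lambda_{\max}(A)$ for rank-deficient $A$. For \emph{(a) and (c)}, I would take $\tilde N(A)$ to be the orthogonal projection onto $\ker A$ (equivalently, $I$ minus the projection onto the column space of $A$): this is continuous on each rank stratum, equals $N$ on projections, and is Gale by Proposition~\ref{prop:matroid}; it fails to be involutive since $\tilde N(\tilde N(A))$ is the projection onto $\operatorname{range}(A)$. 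For \emph{(b) and (c)}, where continuity is abandoned, a selection argument works: stratify $\oS(n)=\bigsqcup_M \oS_M(n)$ by column matroid, observe Gale duality pairs these pieces as $M\leftrightarrow M^\ast$ (with $\oS_{M^\ast}(n)\ne\emptyset$ whenever $\oS_M(n)\ne\emptyset$, again via Proposition~\ref{prop:matroid}), and on each dual pair extend the bijection $P\mapsto I-P$ between the projection-subsets to an arbitrary bijection $\oS_M(n)\to\oS_{M^\ast}(n)$ (both sides have the cardinality of the continuum), using its inverse on $\oS_{M^\ast}(n)$; on a self-dual stratum, instead extend the fixed-point-free involution $P\mapsto I-P$ to a fixed-point-free involution of $\oS_M(n)$. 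The remaining work in all three constructions---verifying the Gale property through Proposition~\ref{prop:matroid}, and checking the cardinality and duality bookkeeping in the last case---is elementary.
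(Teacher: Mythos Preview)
Your proposal is correct and follows essentially the same architecture as the paper: the impossibility for $n\ge 3$ via a rank/dimension mismatch between $\oS_1(n)$ and $\oS_{n-1}(n)$ and invariance of domain, and the same three constructions for the pairs (Casazza et al.'s $E(A)=\lambda_{\max}(A)I-A$ for (a)+(b), projection onto $\ker A$ for (a)+(c), and a matroid-stratified bijection argument for (b)+(c)). The one substantive difference is the $n=2$ case: the paper simply observes that the Casazza et al.\ map $E$ already satisfies (c) when $n=2$ (since every $A\in\oS(2)$ is a positive scalar multiple of a projection), thereby reusing Case~I rather than introducing your separate trace-normalized map $\tilde N$; both are valid, but the paper's choice is more economical.
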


\begin{proof}
First, we prove ($\Rightarrow$) by contradiction.
Take any $n>2$, and suppose there exists a stratum-wise continuous involutive Gale extension $E\colon\oS(n)\to\oS(n)$ of the Naimark complement.
Notice that for any $A\in\oS(n)$, the size of every basis in the matroid of $A$ equals the rank of $A$, and so
\[
\operatorname{rank}E(A)
=n-\operatorname{rank}(A)
\]
since $E$ is Gale.
Next, since $E$ is involutive, then for each $d\in\{1,\ldots,n-1\}$, $E$ determines a bijection between the rank-$d$ and rank-$(n-d)$ strata of $\oS(n)$.
Since $E$ is stratum-wise continuous, these bijections are homeomorphisms.
Meanwhile, the dimension of the rank-$d$ stratum is $dn-\binom{d}{2}$ (see~\cite{BonnabelS:10}, for example).
As such, the rank-$1$ and rank-$(n-1)$ strata are homeomorphic manifolds of dimensions $n$ and $n(n-1)-\binom{n-1}{2}>n$, which contradicts invariance of domain.

For the remainder of the proof, we construct extensions of the Naimark complement that satisfy various combinations of (a), (b), and (c).

\medskip
\noindent
\textbf{Case I:} (a) and (b) (and also (c) when $n=2$).
Consider the map $E$ defined in Equation~\eqref{eq.casazza}.
First, for every $P\in\oP(n)$, it holds that
\[
\lambda_{\max}(P)I-P
=I-P
=N(P),
\]
and so $E$ is an extension of $N$.
Next, (a) follows from the fact that $A\mapsto\lambda_{\max}(A)$ is continuous (which in turn is a consequence of Weyl's inequality).
For (b), observe that
\[
\lambda_{\max}(E(A))
=\lambda_{\max}(\lambda_{\max}(A)I-A)
=\lambda_{\max}(A)-\lambda_{\min}(A)
=\lambda_{\max}(A),
\]
and so
\[
E(E(A))
=\lambda_{\max}(E(A))I-E(A)
=\lambda_{\max}(A)I-(\lambda_{\max}(A)I-A)
=A.
\]
In the special case where $n=2$, note that $\oS(2)$ consists of all positive scalar multiples of the members of $\oP(2)$ as a consequence of the real spectral theorem:
\[
\oS(2)
=\{\lambda P:\lambda>0,P\in\oP(2)\}.
\]
In this case, $E$ can be rewritten as
\[
E(\lambda P)=\lambda N(P)=\lambda(I-P).
\]
Since $P$ and $I-P$ have Gale-dual matroids, and since multiplying by $\lambda>0$ does not change these matroids, it follows that $E$ is Gale in this case.

\medskip
\noindent
\textbf{Case II:} (a) and (c).
Consider the map $E$ that sends any matrix to the orthogonal projection onto its kernel.
This is clearly an extension of $N$.
For (a), we apply the Davis--Kahan $\sin\Theta$ theorem~\cite{DavisK:70}.
Consider any $A\in\oS(n)$ of rank $d\in\{1,\ldots,n-1\}$, and let $\lambda>0$ denote its $d$th largest eigenvalue.
Given any $\epsilon>0$, put $\delta:=\lambda\epsilon/\sqrt{2}$.
Then for every $B\in\oS(n)$ of rank $d\in\{1,\ldots,n-1\}$ satisfying $\|A-B\|_F<\delta$, it holds that
\[
\|E(A)-E(B)\|_F
=\sqrt{2}\cdot\|\sin\Theta\|_F
\leq\sqrt{2}\cdot\frac{\|A-B\|_F}{\lambda}
<\epsilon,
\]
where $\sin\Theta$ denotes the diagonal matrix whose diagonal entries are the sines of the principal angles between the kernels of $A$ and of $B$.
Thus, the restriction of $E$ to the rank-$d$ stratum of $\oS(n)$ is continuous at $A$, and (a) follows since $A$ and $d$ were arbitrary.
Meanwhile, (c) follows immediately from Proposition~\ref{prop:matroid}.

\medskip
\noindent
\textbf{Case III:} (b) and (c).
Let $\oM(n)$ denote the (finite) set of matroids that can be represented by members of $\oS(n)$.
Proposition~\ref{prop:matroid} establishes that for each $M\in\oM(n)$, the Gale dual $M^*$ is also a member of $\oM(n)$. 
Let $S_M$ denote the set of matrices in $\oS(n)$ with matroid $M\in\oM(n)$.
Considering $S_M$ is closed under positive scalar multiplication, it holds that
\[
|\mathbb{R}|
=|\mathbb{R}_+\setminus\{1\}|
\leq|S_M\setminus\oP(n)|
\leq|\mathbb{R}^{n\times n}|
=|\mathbb{R}|.
\]
The Schr\"{o}der--Bernstein theorem then delivers a bijection $B_M\colon S_M\setminus{P}(n)\to S_{M^*}\setminus{P}(n)$ for each $M\in\oM(n)$.
Note that we can iteratively redefine $B_{M^*}$ to be the inverse of $B_M$ for each $M\in\oM(n)$.
The resulting bijections together determine a map $E\colon\oS(n)\to\oS(n)$ by
\[
E(A)
=\left\{\begin{array}{rl} N(A)&\text{if }A\in\oP(n)\\
B_M(A)&\text{if }A\in S_M\setminus\oP(n),
\end{array}\right.
\]
which extends $N$ in a way that satisfies (b) and (c).
\end{proof}

\section{Discussion}

We note that Theorem~\ref{thm.main} also holds in the complex case, with a nearly identical proof.
(The only modification is that the real dimension of the rank-$d$ stratum is $2dn-d^2$ in the complex case.)
While Theorem~\ref{thm.main} establishes the impossibility of extending the Naimark complement to \textit{all} frames, there is still hope of extending to a large family of frames.
For example, there is a standard notion of Naimark complement for any tight frame, which coincides with the extension presented in Equation~\eqref{eq.casazza}.
(Incidentally, this explains why $n=2$ appears to be an exceptional case in Theorem~\ref{thm.main}; it just so happens that \textit{every} frame consisting of two vectors in $\mathbb{R}^1$ is tight.)
More generally, one might try extending the Naimark complement to any \textit{positively scalable} frame, that is, any frame $(v_i)_{i=1}^n$ in $\mathbb{R}^d$ for which there exist positive scalars $(c_i)_{i=1}^n$ such that $(c_iv_i)_{i=1}^n$ is Parseval~\cite{KutyniokOPT:13,copenhaver2014diagram,cahill2013note}.
In fact, an appropriate frame-dependent choice of scalars will determine an extension of the Naimark complement: scale the frame vectors to form a Parseval frame, take the Naimark complement, and then un-scale the result.
Finally, we note that our impossibility result does not preclude the possibility of a continuous, involutive, and Gale extension of the Naimark complement for all $d\times 2d$ frames; indeed, the complementary strata in this case do not lead to a contradiction of invariance of domain, as they are the same manifold.
(We suspect that no such extension exists when $d>1$, but a proof may require ideas from obstruction theory.)

\section*{Acknowledgments}

DGM was supported by NSF DMS 2220304. The authors report there are no competing interests to declare.
Throughout this research project, the authors used ChatGPT, specifically the publicly available GPT-4o model with search and thinking tools, as an internet search engine.
The authors thank the reviewers for carefully reading our manuscript and providing helpful feedback.

\end{document}